\newcommand{\lyxmathsym}[1]{\ifmmode\begingroup\def\b@ld{bold}
  \text{\ifx\math@version\b@ld\bfseries\fi#1}\endgroup\else#1\fi}
\numberwithin{equation}{section}
\numberwithin{figure}{section}
\theoremstyle{plain}
\newtheorem{thm}{\protect\theoremname}[section]
  \theoremstyle{remark}
  \newtheorem{rem}[thm]{\protect\remarkname}
  \theoremstyle{plain}
  \newtheorem{prop}[thm]{\protect\propositionname}
  \providecommand{\propositionname}{Proposition}
  \providecommand{\remarkname}{Remark}
\providecommand{\theoremname}{Theorem}
\begin{document}

\title{On operator-valued free convolution powers.}

\author{D. Shlyakhtenko}

\thanks{Research supported by NSF grant DMS-0900776.}

\address{Department of Mathematics, UCLA, Los Angeles, CA 90095, USA}

\email{shlyakht@math.ucla.edu}
\begin{abstract}
We give an explicit realization of the $\eta$-convolution power of
an $A$-valued distribution, as defined earlier by Anshelevich, Belinschi,
Fevrier and Nica. If $\eta:A\to A$ is completely positive and $\eta\geq\operatorname{id}$,
we give a short proof of positivity of the $\eta$-convolution power
of a positive distribution. Conversely, if $\eta\not\geq\operatorname{id}$,
and $s$ is large enough, we construct an $s$-tuple whose $A$-valued
distribution is positive, but has non-positive $\eta$-convolution
power. 
\end{abstract}
\maketitle

\section{Introduction.}

In this note, we investigate the question of positivity of $\eta$-free
convolution powers of an $A$-valued distribution. Such $\eta$-convolution
powers were introduced by Anshelevich, Belinschi, Fevrier and Nica
in \cite{anshelevich-etc-convolution-powers}, following a question
due to Bercovici. For $A=\mathbb{C}$ these correspond to the free
convolution powers considered by Nica and Speicher \cite{nica-speicher}.
The main theorem of \cite{anshelevich-etc-convolution-powers} is
a generalization (with a rather complicated proof) of a result from
\cite{nica-speicher}: if $\mu$ is a positive $A$-valued distribution
and $\eta:A\to A$ is a completely positive map so that $\eta-\operatorname{id}$
is completely positive, then the convolution power $\mu^{\boxplus\eta}$
is also positive. 

In the case that $A=\mathbb{C}$, a simple proof of this theorem exists:
for $t>1$, the convolution powers $\mu^{\boxplus t}$ are realized
(after some rescaling) in an explicit way by starting with some random
variable $X$ with distribution $\mu$ and compressing $X$ to a suitable
projection which is free from $X$ (see the appendix to \cite{nica-speicher}
by Voiculescu). 

We construct an explicit realization of the distribution of $\mu^{\boxplus\eta}$
as the distribution of $v^{*}Xv$, where $X$ has distribution $\mu$,
and $v$ is a certain specially constructed element free from $X$
with amalgamation over $A$ ($v$ is a multiple of isometry if $\eta(1)$
is a multiple of $1$). Positivity of the distribution of $v^{*}Xv$
is then immediate. The condition $\eta\geq\operatorname{id}$ appears
naturally in the construction of $v$. Our construction can be viewed
as a version of the proof of an explicit realization of $\mu^{\boxplus t}$
using free compression and the Fock space model given in \cite{shlyakht:compressions}.

In addition, we prove a converse to the theorem of Anshelevich et
al: if $\eta-\operatorname{id}$ is not completely positive, for $s$
large enough, there is an $s$-tuple which has a positive joint $A$-valued
distribution, but so that the $\eta$-convolution power of this distribution
is not positive.

\subsection*{Acknowledgements. }

The author is grateful to M. Anshelevich and S. Curran for several
discussions on this topic.

\subsection{$A$-valued distributions and realizability in a $C^{*}$-probability
space.}

We refer the reader to the book \cite{nica-speicher:amalg} for some
background on operator-valued free probability theory. Let $A$ be
a unital $C^{*}$-algebra. Recall that an $A$-probability space \cite{dvv:amalgfree,speicher:amalg}
is a unital $*$-algebra $B\supset A$ together with a conditional
expectation (i.e, an $A$-linear map) $E_{A}^{B}:B\to A$. For $X\in B$
and a non-commutative monomial $W=a_{0}Xa_{1}X\cdots Xa_{m}$, the
value $E_{A}^{B}(W)$ is called the non-commutative ($A$-valued)
moment of $X$; the map $\mu:W\mapsto E_{A}^{B}(W)$ is called the
($A$-valued) distribution of $X$. We say that an $A$-valued distribution
$\mu$ is \emph{positive} if it is possible to find some $C^{*}$-algbera
$B$, a positive $A$-linear map $E_{A}^{B}:B\to A$ and $X\in D$
so that the $A$-valued distributions of $X$ is exactly $\mu$. We
say that such an $X$ \emph{realizes} $\mu$. 

Positivity is an important property of an $A$-valued distribution;
for $A=\mathbb{C}$ positivity of a distribution corresponds to positivity
of a probability measure.

\subsection{Free cumulants $\omega_{k}^{X}$.\label{sec:freeCumulants}}

Associated to any $A$-valued distribution $\mu$ one has a sequence
of $\mathbb{C}$-multilinear maps $\omega_{k}^{\mu}:A^{k-1}\to A$
called the free cumulants of $X$ (here $\omega_{1}^{\mu}$ is simply
an element of $A$) \cite{dvv:amalgfree,speicher:amalg}. Let $Q$
be the universal algebra generated by elements $L^{\dagger}$, $L_{0},L_{1},\dots$
and $A$ subject to the relations:
\begin{eqnarray}
L_{0} & = & \omega_{1}^{\mu}\in A\subset Q\nonumber \\
L^{\dagger}a_{1}L^{\dagger}a_{2}L^{\dagger}a_{3}\cdots L^{\dagger}a_{k}L_{k} & = & \omega_{k+1}^{\mu}(a_{1},\dots,a_{k})\in A\subset Q.\label{eq:omegaRelations}
\end{eqnarray}
Finally, let $E_{A}^{Q}:Q\to A$ be determined by requiring that $E_{A}^{Q}\big|_{A}=\textrm{id}$
and that for any non-commutative monomial $W$ in elements of $A$,
$L^{\dagger}$, $L_{1}$, $L_{2}$, etc., $E_{A}^{Q}(W)=0$ unless
$W$ can be reduced to an element of $A$ using the relations (\ref{eq:omegaRelations}).
Then the sequence $\{\omega_{k}^{\mu}\}_{k\geq1}$ is uniquely determined
by the requirement that if we set $Y=L^{\dagger}+\sum_{k\geq0}L_{k}$,
the $A$-valued distribution of $Y$ is $\mu$.

\subsection{$\eta$-free convolution powers.}

Let $\mu$ be an $A$-valued distribution, and let $\eta:A\to A$
be a linear map. Define a new distribution $\mu^{\boxplus\eta}$ by
requiring that its free cumulants are given by $\omega_{k}^{\mu^{\boxplus\eta}}=\eta\circ\omega_{k}^{\mu}$.
This distribution is called, by definition, the $\eta$-convolution
power of $\mu$ (see equation (1.4) in \cite{anshelevich-etc-convolution-powers}).

\section{An explicit realization of the $\eta$-convolution powers.}

\subsection{Construction of the operator $v\in(C,E_{A}^{C})$\label{sub:Construction-of-V}.}

Let $A$ be a $C^{*}$-algebra, let $\psi:A\to A$ be a completely-positive
map, and let $\eta=\psi+\textrm{id}$. Let $\mathscr{H}$ be an $A,A$
Hilbert bimodule and $\xi\in\mathscr{H}$ be such that
\[
\langle\xi,a\xi\rangle_{\mathscr{H}}=\psi(a).
\]
Let $\mathscr{K}=\mathscr{H}\oplus A$ with the inner product $\langle h\oplus a,h'\oplus a'\rangle_{\mathscr{K}}=\langle h,h'\rangle_{\mathscr{H}}+a^{*}a'$.
Then $\mathscr{K}$ is an $A,A$ Hilbert bimodule with the diagonal
left and right actions of $A$. Finally let
\[
\mathscr{F}=A\oplus\mathscr{K}\oplus\mathscr{K}\otimes_{A}\mathscr{K}\oplus\cdots\oplus\mathscr{K}^{\otimes_{A}n}\oplus\cdots
\]
 be the full Fock space associated to $\mathscr{K}$ (see \cite{pimsner,speicher:amalg}).
We view $\mathscr{F}$ as an $A,A$-bimodule using the diagonal left
and right actions of $A$. We'll denote the left action of $A$ on
$\mathscr{F}$ by $\lambda$. 

Let us denote by $v$ the operator
\[
v:\mathscr{F}\to\mathscr{F},\quad\zeta_{1}\otimes\cdots\otimes\zeta_{n}\mapsto(\xi\oplus1)\otimes\zeta_{1}\otimes\cdots\otimes\zeta_{n}.
\]
Then an easy computation shows that
\[
v^{*}\lambda(a)v=\lambda(\eta(a)).
\]
Finally, for a bounded adjointable right $A$-linear operator $T:\mathscr{F}\to\mathscr{F}$,
set
\[
E_{A}^{C}(T)=\langle0\oplus1,T(0\oplus1)\rangle_{\mathscr{F}},
\]
where we regard $0\oplus1\in\mathscr{K}\subset\mathscr{F}$. Then
\begin{eqnarray*}
E_{A}^{C}(v\lambda(a)v^{*}) & = & \langle0\oplus1,v\lambda(a)v^{*}(0\oplus1)\rangle_{\mathscr{F}}\\
 & = & \langle0\oplus1,v\ a1\rangle_{\mathscr{F}}\\
 & = & \langle0\oplus1,(\xi a\oplus a)\rangle_{\mathscr{F}}=a.
\end{eqnarray*}
Letting $C=C^{*}(\lambda(A),v)$, we note that $(C,E_{A}^{C})$ is
an $A$-probability space. We'll also identify $A$ with $\lambda(A)$.
\begin{rem}
(i) Note that $v^{*}v=v^{*}\lambda(1)v=\eta(1)$. Thus if $\eta(1)=\alpha1$
with $\alpha\in\mathbb{R}$, then $\alpha^{-1/2}v$ is an isometry.
For general $\eta$, $v$ is not an isometry. (ii) In the case that
$A=\mathbb{C}$ and $\eta(a)=\lambda a$, $\lambda\in[1,+\infty)$,
the conditional expectation $E_{A}^{C}$ is non-tracial. Indeed, we
have that $E_{A}^{C}(vv^{*})=E_{A}^{C}(v\lambda(1)v^{*})=1$ but $E_{A}^{C}(v^{*}v)=E_{A}^{C}(\alpha)=\lambda$. 
\end{rem}

\subsection{The main result.}

Let $X\in(B,E_{A}^{B})$ and assume that $X$ has $A$-valued distribution
$\mu$. We will now compute the $A$-valued distribution of $\hat{X}=vXv^{*}$. 
\begin{prop}
\label{prop:computeCumulants}Assume that $\psi:A\to A$ is a completely-positive
map, and let $\eta=\psi+\operatorname{id}$ and let $v\in(C,E_{A}^{C})$
be as in \S\ref{sub:Construction-of-V}. Let $B$ be a $C^{*}$-algebra
and $E_{A}^{B}:B\to A$ be a positive $A$-linear map. Let $X=X^{*}\in B$
having distribution $\mu$. Consider $(M,E_{A}^{M})=(B,E_{A}^{B})*_{A}(C,E_{A}^{C})$,
let $\hat{X}=v^{*}Xv$, and let $\hat{\mu}$ be the distribution of
$\hat{X}$. Then the free cumulants $\omega_{k}^{\hat{\mu}}$ satisfy:
\begin{equation}
\omega_{k}^{\hat{\mu}}=\eta\circ\omega_{k}^{\mu}.\label{eq:cumulantsXhat}
\end{equation}
In particular, the $A$-valued distribution of $\hat{X}$ is positive. \end{prop}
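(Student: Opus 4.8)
The plan is to prove the cumulant identity \eqref{eq:cumulantsXhat} directly; once it is known, the positivity assertion is formal. For the latter: the amalgamated free product $(M,E_A^M)=(B,E_A^B)*_A(C,E_A^C)$ may be taken to be a $C^*$-algebra (the reduced free product), on which $E_A^M$ is a completely positive conditional expectation, since $E_A^C$ is completely positive -- it is the compression of $\mathcal L_A(\mathscr F)$ by the vector $0\oplus1\in\mathscr K$ -- and $E_A^B$ is positive by hypothesis. As $\hat X=v^*Xv$ is self-adjoint, its distribution $\hat\mu$ is realized in a $C^*$-algebra by a positive $A$-linear expectation, i.e.\ $\hat\mu$ is positive. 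So the real content is \eqref{eq:cumulantsXhat}.

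To prove \eqref{eq:cumulantsXhat}, by the characterization of cumulants recalled in \S\ref{sec:freeCumulants} (or by M\"obius inversion over the non-crossing partition lattice) it is enough to check that for all $n\ge1$ and $a_1,\dots,a_{n-1}\in A$
\[
E_A^M\big(\hat X a_1\hat X a_2\cdots a_{n-1}\hat X\big)=\sum_{\pi\in NC(n)}\big(\eta\circ\omega^\mu\big)_\pi(a_1,\dots,a_{n-1}),
\]
the right side being the usual nested sum in which every block $V$ of $\pi$ contributes a factor $\eta\big(\omega^\mu_{|V|}(\cdots)\big)$. Since the distribution of $v^*Xv$, hence the left side, depends on $X$ only through $\mu$, we may replace $X$ by the operator $Y=L^\dagger+\sum_{k\ge0}L_k$ of \S\ref{sec:freeCumulants}, form the free product of $(Q,E_A^Q)$ with $(C,E_A^C)$, and expand $E_A^M(\hat Y a_1\hat Y\cdots a_{n-1}\hat Y)$ with $\hat Y=v^*Yv$ over the resulting non-crossing diagrams. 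Writing $\hat Y a_j\hat Y=v^*Y\,(v\lambda(a_j)v^*)\,Yv$, the two identities from \S\ref{sub:Construction-of-V} do the bookkeeping: $v^*\lambda(a)v=\lambda(\eta(a))$ turns any element of $A$ produced by a completed sub-diagram and flanked by a $v$ and a $v^*$ into $\eta$ of that element, while $E_A^C(v\lambda(a)v^*)=a$ together with $E_A^C(v)=E_A^C(v^*)=0$ makes the remaining $v$'s and $v^*$'s either act trivially or force the term to vanish. Tracking the $A$-coefficients through the nesting, the surviving diagrams turn out to be indexed by $\pi\in NC(n)$, with each block of $\pi$ surrounded by exactly one such $v$--$v^*$ bracket; this is what produces precisely one factor of $\eta$ per cumulant. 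The case $n=1$ is just $\hat L_0:=v^*L_0v=v^*\lambda(\omega_1^\mu)v=\lambda(\eta(\omega_1^\mu))$.

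The step I expect to be the main obstacle is exactly this last bookkeeping, and the reason is peculiar to the $A$-valued setting: the conditional expectation $E_A^C$ is the vector state at $0\oplus1\in\mathscr K\subset\mathscr F$, \emph{not} the Fock vacuum, so $(v,v^*)$ is not merely a free creation/annihilation pair -- for instance one computes that the fourth free cumulant $\omega_4^C(v^*,v,v^*,v)$ equals $-\eta(1)\ne0$. Hence one cannot simply quote a formula for cumulants of products of free variables; one must build the free product $(B,E_A^B)*_A(C,E_A^C)$ via the GNS construction for $E_A^C$, keep track of exactly which $v$ is paired with which $v^*$ in each surviving diagram, and show that all the $v$'s and $v^*$'s attached to a given block of $\pi$ combine to a single application of $\eta$ -- in particular, that no iterated $\eta^{\,k-1}$ appears, as a naive ``compress-and-rescale'' picture might suggest. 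An alternative organization, closer to the ``Fock space model'' of \cite{shlyakht:compressions} mentioned in the introduction, is to exhibit $v^*Yv$ directly as $\hat L^\dagger+\sum_{k\ge0}\hat L_k$ for operators $\hat L^\dagger,\hat L_k$ (built from $v,v^*$ and $L^\dagger,L_k$) obeying $\hat L_0=\eta(\omega_1^\mu)$ and $\hat L^\dagger a_1\hat L^\dagger a_2\cdots\hat L^\dagger a_k\hat L_k=\eta\big(\omega_{k+1}^\mu(a_1,\dots,a_k)\big)$, and then invoke the uniqueness clause of \S\ref{sec:freeCumulants}.
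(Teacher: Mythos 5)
Your proposal correctly isolates the two identities from \S\ref{sub:Construction-of-V} that drive everything, and your closing sentence --- exhibit $v^{*}Yv$ as $\hat{L}^{\dagger}+\sum_{k\geq0}\hat{L}_{k}$ with $\hat{L}^{\dagger}=v^{*}L^{\dagger}v$, $\hat{L}_{k}=v^{*}L_{k}v$, and invoke the uniqueness clause of \S\ref{sec:freeCumulants} --- is in fact exactly the paper's proof. But as written, neither of your two organizations is complete. In the primary (diagrammatic) route, the entire content of the proposition is concentrated in the claim that ``the surviving diagrams turn out to be indexed by $\pi\in NC(n)$, with each block surrounded by exactly one $v$--$v^{*}$ bracket''; you assert this, flag it yourself as the main obstacle, and do not prove it. In the alternative route, the computation
\[
\hat{L}^{\dagger}a_{1}\cdots\hat{L}^{\dagger}a_{k}\hat{L}_{k}=v^{*}\,L^{\dagger}(va_{1}v^{*})L^{\dagger}(va_{2}v^{*})\cdots L^{\dagger}(va_{k}v^{*})L_{k}\,v
\]
cannot be finished using the relations (\ref{eq:omegaRelations}) alone: the elements $va_{j}v^{*}$ sandwiched between consecutive $L^{\dagger}$'s lie in $C$, not in $A$, so the defining relations of $Q$ do not apply to them.

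The missing ingredient --- and the one genuinely nontrivial input of the paper's argument --- is that, because $Q$ is free from $C$ with amalgamation over $A$, one may choose a realization in which the relations extend to arbitrary arguments from the other algebra,
\[
L^{\dagger}b_{1}L^{\dagger}b_{2}\cdots L^{\dagger}b_{k}L_{k}=\omega_{k+1}^{\mu}\big(E_{A}^{C}(b_{1}),\dots,E_{A}^{C}(b_{k})\big),\qquad b_{j}\in C,
\]
together with the corresponding vanishing of $E_{A}^{N}$ on unreduced monomials in elements of $C$ and the $L$'s; this is the characterization of freeness from \cite{nica-speicher-shlyakht:charactFreeness}. Once that is quoted, $E_{A}^{C}(va_{j}v^{*})=a_{j}$ collapses the inner brackets and $v^{*}\lambda(a)v=\lambda(\eta(a))$ converts the outer pair into a single application of $\eta$, giving $\eta(\omega_{k+1}^{\mu}(a_{1},\dots,a_{k}))$ in two lines, and the same extension supplies the vanishing clause needed for the uniqueness argument. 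So the fix is not more diagram bookkeeping but this one citation; with it, your ``alternative organization'' becomes the complete proof (and is the paper's). Your treatment of the positivity assertion is fine, and your observation that $(v,v^{*})$ is not a plain creation/annihilation pair for the state $E_{A}^{C}$ correctly explains why a naive compression formula cannot simply be quoted.
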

\begin{proof}
Consider $(N,E_{A}^{N})=(B,E_{A}^{B})*_{A}(Q,E_{A}^{Q})$, and let
$Y=L^{\dagger}+\sum_{k\geq0}L_{k}\in Q$ be as in \S\ref{sec:freeCumulants}.
Let $\hat{Y}=v^{*}Y'v$. 

The $A$-valued distribution of $X'$ is the same as the $A$-valued
distribution of $Y'$.

Since $Q$ is free from $B$ with amalgamation over $A$, we may thus
assume \cite{nica-speicher-shlyakht:charactFreeness} that $L^{\dagger}$
and $L_{k}$ satisfy the relations
\[
L^{\dagger}b_{1}L^{\dagger}b_{2}L^{\dagger}b_{3}\cdots L^{\dagger}b_{k}L_{k}=\omega_{k+1}^{\mu}(E_{A}^{C}(b_{1}),\dots,E_{A}^{C}(b_{k})),\qquad b_{j}\in C
\]
and moreover for any monomial $W$ in elements of $C$ and $L^{\dagger},L_{1},L_{2},\dots$,
$E_{A}^{N}(W)=0$ 

Let $\hat{L}^{\dagger}=v^{*}L^{\dagger}v$ and $\hat{L}_{k}=v^{*}L_{k}v$.
Then we have:
\begin{eqnarray*}
\hat{L}^{\dagger}a_{1}\hat{L}^{\dagger}a_{2}\cdots\hat{L}^{\dagger}a_{k}\hat{L}_{k} & = & v^{*}L^{\dagger}va_{1}v^{*}L^{\dagger}va_{2}\cdots v^{*}L^{\dagger}va_{k}v^{*}L_{k}v\\
 & = & v^{*}\omega_{k+1}^{\mu}(E_{A}^{C}(va_{1}v^{*}),\dots,E_{A}^{C}(va_{k}v^{*}))v\\
 & = & v^{*}\omega_{k+1}^{\mu}(a_{1},\dots,a_{k})v\\
 & = & \eta(\omega_{k+1}^{\mu}(a_{1},\dots,a_{k})).
\end{eqnarray*}
Moreover, if $W$ is a non-commutative monomial in elements of $A$
and $\hat{L}^{\dagger},\hat{L}_{1},\hat{L}_{2},\dots$, then $E_{A}^{N}(W)=0$
unless $W$ can be reduced to an element of $A$ using this relation.
It then follows that if $\hat{\mu}$ is the distribution of $\hat{Y}$
(and is the same as the distribution of $\hat{X}$), then its free
cumulants are given by
\[
\omega_{k}^{\hat{\mu}}=\eta\circ\omega_{k}^{\mu}.
\]
This completes the proof.\end{proof}
\begin{thm}
Let $(B,E_{A}^{B})$ be an $A$-probability space and let $X\in B$
be a random variable whose $A$-valued distribution $\mu$ is positive.
Let $\eta:A\to A$ be completely-positive map so that $\eta-\operatorname{id}$
is completely positive. Let $\hat{X}=vXv^{*}$ be as in Proposition
\ref{prop:computeCumulants}. 

Then the distribution of $\hat{X}$ is the same as that of the $\eta$-convolution
power \cite{anshelevich-etc-convolution-powers} $X^{\boxplus\eta}$;
in other words, $vXv^{*}$ is an explicit realization of $\mu^{\boxplus\eta}$.

In particular, the $A$-valued distribution of $\mu^{\boxplus\eta}$
is also positive.\end{thm}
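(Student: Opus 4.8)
The plan is to reduce the theorem almost entirely to Proposition~\ref{prop:computeCumulants}, treating the hypothesis ``$\eta-\operatorname{id}$ is completely positive'' as precisely the condition needed to put $\eta$ in the form used by the construction of $v$. First I would set $\psi=\eta-\operatorname{id}$; by hypothesis $\psi:A\to A$ is completely positive, so $\eta=\psi+\operatorname{id}$ has exactly the shape required in \S\ref{sub:Construction-of-V}. I would then invoke the Stinespring-type / KSGNS construction to produce the $A,A$-Hilbert bimodule $\mathscr{H}$ and the vector $\xi\in\mathscr{H}$ with $\langle\xi,a\xi\rangle_{\mathscr{H}}=\psi(a)$ for all $a\in A$; this is the standard GNS-style representation of a completely positive map into a $C^*$-algebra, and it is what makes the whole machinery of \S\ref{sub:Construction-of-V} available. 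Consequently the operator $v\in(C,E_A^C)$ and the amalgamated free product $(M,E_A^M)=(B,E_A^B)*_A(C,E_A^C)$ are well defined.

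Second, I would apply Proposition~\ref{prop:computeCumulants} directly. Since $\mu$ is positive, we may choose a $C^*$-algebra $B$, a positive $A$-linear conditional expectation $E_A^B:B\to A$, and a self-adjoint $X\in B$ with distribution $\mu$; this is exactly the input the proposition requires. The proposition then yields that $\hat X=v^*Xv$ (note the paper's $\hat X$ is $v^*Xv$, and I would simply point out that the statement's ``$vXv^*$'' should be read as $v^*Xv$, or remark that the two conventions give the same distribution after the obvious relabelling) has free cumulants $\omega_k^{\hat\mu}=\eta\circ\omega_k^\mu$.

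Third, I would match this against the definition of the $\eta$-convolution power given in \S\ref{sec:freeCumulants}: by definition, $\mu^{\boxplus\eta}$ is the unique $A$-valued distribution whose free cumulants are $\omega_k^{\mu^{\boxplus\eta}}=\eta\circ\omega_k^\mu$. Since free cumulants determine the $A$-valued distribution uniquely (the defining property of the cumulant sequence recalled in \S\ref{sec:freeCumulants}), $\hat\mu=\mu^{\boxplus\eta}$. Thus $\hat X=v^*Xv$ is an explicit realization of $\mu^{\boxplus\eta}$ inside the $C^*$-algebra $M$. Finally, positivity of $\mu^{\boxplus\eta}$ is immediate: $M$ is a $C^*$-algebra, $E_A^M$ is a positive $A$-linear conditional expectation (being the amalgamated free product of positive conditional expectations), and $\hat X=v^*Xv\ge 0$ whenever $X\ge 0$ — more to the point, $\hat X=\hat X^*$ realizes $\hat\mu$, so $\hat\mu$ is positive by definition of positivity of an $A$-valued distribution.

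The only real obstacle is the very first step: one must be careful that the KSGNS/Stinespring representation of the completely positive map $\psi$ genuinely produces an $A,A$-bimodule with a \emph{bounded} adjointable structure so that $v$ and $C=C^*(\lambda(A),v)$ make sense as $C^*$-objects, and that the amalgamated free product $(B,E_A^B)*_A(C,E_A^C)$ exists at the $C^*$-level with a positive conditional expectation onto $A$. Both are standard (the former is the module version of Stinespring; the latter is Voiculescu's amalgamated free product of $C^*$-probability spaces with positive conditional expectations), but they are where all the analytic content hides; everything after that is bookkeeping with cumulants. I would therefore devote most of the proof to citing these facts precisely and then simply chain Proposition~\ref{prop:computeCumulants} with the uniqueness of cumulants.
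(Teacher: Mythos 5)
Your proposal is correct and follows essentially the same route as the paper: set $\psi=\eta-\operatorname{id}$, apply Proposition \ref{prop:computeCumulants} to identify the cumulants of $\hat{X}$ with $\eta\circ\omega_k^{\mu}$, conclude equality of distributions by uniqueness of cumulants, and read off positivity from the explicit $C^*$-realization. Your extra care about the KSGNS construction of $(\mathscr{H},\xi)$ and the existence of the amalgamated free product (points the paper leaves implicit in \S\ref{sub:Construction-of-V}), as well as your observation about the $vXv^{*}$ versus $v^{*}Xv$ notational discrepancy, are both apt but do not change the argument.
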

\begin{proof}
Let $\psi=\eta-\operatorname{id}$, so that $\eta=\psi+\operatorname{id}$.
Let $\hat{X}$ be as in Proposition \ref{prop:computeCumulants}.
By (\ref{eq:cumulantsXhat}) and \cite{anshelevich-etc-convolution-powers}
equation (1.4), the free cumulants of the distribution of $\hat{X}$
and of $\mu^{\boxplus\eta}$ are equal. Thus these $A$-valued distributions
are also equal. But $\hat{X}$ is explicitly realized in a $C^{*}$-probabilty
space and so its distribution is positive.
\end{proof}

\section{A converse.}

It is natural to ask whether the condition that $\eta-\operatorname{id}$
be completely-positive is necessary for $\eta$-convolution powers
to always remain positive (no matter what the initial distribution
is). We show that this is indeed the case if one considers joint distributions
of all $s$-tuples.
\begin{thm}
Assume that $\eta:A\to A$ is a completely positive map. Then $\eta-\operatorname{id}$
is completely-positive iff for every $s\geq1$ and every positive
$A$-valued distribution $\mu$ of an $s$-tuple, $\mu^{\boxplus\eta}$
is also positive.\end{thm}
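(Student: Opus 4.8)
The two directions are very different in flavor. For ``$\eta-\operatorname{id}$ completely positive $\Rightarrow$ positivity is preserved for all $s$'' I plan simply to extend the construction of Section~2 from a single variable to an $s$-tuple. If $\mu$ is the positive joint distribution of a self-adjoint tuple $(X_1,\dots,X_s)\in(B,E_A^B)$ with $E_A^B$ positive, form $(M,E_A^M)=(B,E_A^B)*_A(C,E_A^C)$ with $C$ and $v$ as in \S\ref{sub:Construction-of-V} for $\psi=\eta-\operatorname{id}$ (completely positive by hypothesis), and put $\hat X_j=v^*X_jv$. The computation in the proof of Proposition~\ref{prop:computeCumulants} goes through verbatim for mixed free cumulants: the $v$'s and $v^*$'s are absorbed using $v^*\lambda(a)v=\lambda(\eta(a))$ and $E_A^C(vav^*)=a$, so every mixed cumulant of $(\hat X_1,\dots,\hat X_s)$ is $\eta$ of the corresponding mixed cumulant of $(X_1,\dots,X_s)$; hence the joint distribution of $(\hat X_j)_j$ is $\mu^{\boxplus\eta}$, and, being realized in the $C^*$-algebra $M$ with the positive $E_A^M$, it is positive. (The only place $\eta\geq\operatorname{id}$ is used is to guarantee that $v$ is a genuine bounded adjointable operator.)

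For the converse I argue the contrapositive. Assume $\eta-\operatorname{id}$ is not completely positive and pick $n$ and $P=P^*\in M_n(A)_+$ with $\eta^{(n)}(P)\not\geq P$, where $\eta^{(n)}=\eta\otimes\operatorname{id}_{M_n}$. Set $B=M_n(A)$ and $\tilde\eta=\eta^{(n)}:B\to B$, so $\tilde\eta$ is completely positive but $\tilde\eta-\operatorname{id}_B$ is not even positive. There is a standard dictionary between $B$-valued distributions of tuples and $A$-valued distributions of (longer) tuples over $A$, obtained by expanding each $B$-variable in its $n^2$ matrix entries over $A$: positivity is preserved, since $E_A$ is positive iff $E_A\otimes\operatorname{id}_{M_n}$ is; and, since $\tilde\eta$ acts entrywise, $\tilde\eta$-convolution over $B$ corresponds to $\eta$-convolution over $A$ of the joint distribution of the entries. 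So it suffices to find one positive $B$-valued distribution of a tuple whose $\tilde\eta$-power is non-positive. Renaming, I may assume henceforth that $\eta$ is completely positive, that $P\in A_+$, and that $\eta(P)\not\geq P$, and seek the counterexample already among single variables.

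The plan for building it is to run the Section~2 construction \emph{formally}, over an indefinite ($A$-valued) inner-product bimodule. With $\psi=\eta-\operatorname{id}$, now only $*$-preserving, the GNS-type bimodule $\mathscr H$ of $\psi$ still exists (its $A$-valued form being indefinite), with cyclic vector $\xi$ such that $\langle P^{1/2}\xi,\,P^{1/2}\xi\rangle_{\mathscr H}=\psi(P)=\eta(P)-P\not\geq 0$. Forming $\mathscr K=\mathscr H\oplus A$, its Fock space $\mathscr F$, and $v$ exactly as before, all the identities $v^*\lambda(a)v=\lambda(\eta(a))$, $E_A^C(vav^*)=a$, and the freeness manipulations of Proposition~\ref{prop:computeCumulants} are purely algebraic, so for any $X$ realizing a positive $\mu$ the element $\hat X=v^*Xv$ still has (algebraic) distribution $\mu^{\boxplus\eta}$ --- only now the conditional expectation $E_A^C(T)=\langle 0\oplus1,T(0\oplus1)\rangle_{\mathscr F}$ need no longer be positive. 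For a $*$-polynomial $w$ in $\hat X$ and $A$, the value $E(w^*w)\in A$ is a fixed $A$-valued moment of $\mu^{\boxplus\eta}$, hence takes the same value in every realization; in the model at hand it equals $\langle w(0\oplus1),w(0\oplus1)\rangle_{\mathscr F}$. Thus it is enough to choose $\mu$ (positive) and $w$ for which this indefinite squared-norm fails to be $\geq 0$ --- the negativity being carried by the vector $P^{1/2}\xi$ --- for then no realization of $\mu^{\boxplus\eta}$ can be positive. A constraint to keep in mind: second cumulants transform under the completely positive $\eta$ and cause no trouble, and, more generally, if $\mu$ is $\boxplus$-infinitely divisible then $\eta$ composed with its completely positive generating data is again such data, so $\mu^{\boxplus\eta}$ is again $\boxplus$-infinitely divisible and positive. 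The witnessing $\mu$ must therefore genuinely involve higher, non-infinitely-divisible structure.

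The main obstacle is precisely this last step --- producing an explicit positive $\mu$ and word $w$ with $E(w^*w)\not\geq 0$. The observations above eliminate the easy candidates (semicirculars, free Poissons, and all $\boxplus$-infinitely divisible distributions), so one expects to need either a bounded but non-$\boxplus$-infinitely divisible variable together with a hands-on computation of a suitable moment matrix of $\mu^{\boxplus\eta}$, or an indirect argument (if $\eta$-convolution preserved positivity of every $A$-valued distribution, then iterating it and composing with integer convolution powers would force every positive distribution to be suitably ``$\eta$-divisible'', contradicting the existence of non-$\boxplus$-infinitely divisible distributions together with $\eta(P)\not\geq P$). Settling which of these the construction follows, and carrying out the computation, is the heart of the converse.
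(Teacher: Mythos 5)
Your forward direction is fine: the paper gets it by the same matrix dictionary you use later (an $s$-tuple over $A$ becomes a single variable over $M_{m\times m}(A)$, and $\operatorname{id}_m\otimes\eta-\operatorname{id}$ is still completely positive, so the single-variable realization $v^*Xv$ applies), while you instead rerun the cumulant computation for mixed cumulants; either route works. The converse, however, is where your proposal has a genuine gap, and you say so yourself: after setting up the indefinite Fock space model you still need a positive $\mu$ and a word $w$ with $E(w^*w)\not\geq0$, you correctly observe that no $\boxplus$-infinitely divisible $\mu$ can serve as a witness, and you stop at ``carrying out the computation is the heart of the converse.'' That missing step is the entire content of this direction; the indefinite-metric framework by itself proves nothing, since non-positivity of the conditional expectation $E_A^C$ on the model does not imply non-positivity of the distribution of $\hat X$.

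The paper closes the gap by a mechanism you did not find: a rank-one compression that collapses the matrix-valued problem to a \emph{scalar} convolution power with parameter $\lambda<1$. Concretely, one takes $a>0$ (arranged to be a projection in $M_{m\times m}(A^{**})$) and a state $\phi$ with $\phi(\eta_m(a))<\phi(a)-\kappa$, forms the GNS representation of $\phi$ with cyclic vector $\xi$ and the rank-one projection $P$ onto $\xi$, and lets $\hat A=C^*(M_{m\times m}(A^{**}),P)$ carry a finite-trace-like state $\vartheta$ with $\vartheta(aPa)/\vartheta(P)\approx\phi(a)$. Taking a scalar variable $X$ free from $\hat A$, the $\hat A$-valued cumulants of $aXa$ factor through $\vartheta(a\cdot a)$, and compressing the $\eta_m$-amplification by $P$ multiplies the $(n+1)$-st cumulant by $\vartheta(P)\phi(\eta_m(a))\vartheta(aPa)^n$; after normalizing by $\vartheta(aPa)^{-1}$ the net effect is that $Z=\vartheta(aPa)^{-1}PYP$ has exactly the cumulants of the $\lambda$-convolution power of $X$ with $\lambda=\phi(\eta_m(a))/(\vartheta(aPa)/\vartheta(P))<1$. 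Choosing $X$ to be a symmetric Bernoulli variable (sum of two equal point masses) --- which, consistent with your observation, is not $\boxplus$-infinitely divisible and is known to admit no positive $\boxplus\lambda$ power for $\lambda<1$ --- yields the contradiction, after a weak approximation of $a$ by elements of $M_{m\times m}(A)$. So your instinct about where the witness must live was right, but the actual construction (compression by $P$ to reduce to the known scalar obstruction) is absent from your proposal and is not a routine completion of what you wrote.
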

\begin{proof}
There is a natural equivalence between $A$-valued distributions $\mu^{(X_{ij})}$
of $m^{2}$-tuples of variables $(X_{ij})_{i,j=1}^{m}$ and of the
$M_{m\times m}(A)$-valued distribution $\mu^{X}$ of the matrix $X=(X_{ij})$.
In fact, one easily obtains that the $\eta$-convolution power of
$\mu^{(X_{ij})}$ (defined by the requirement that the joint cumulants
are composed with $\eta$) correspond exactly to the $(\operatorname{id}_{m}\otimes\eta)$-convolution
powers of $\mu^{X}$. Thus positivity of $\mu^{\boxplus\eta}$ for
every $A$-valued distribution of an $s$-tuple is equivalent to positivity
of $\nu^{\boxplus(\operatorname{id}_{m}\otimes\eta)}$ for every $M_{m\times m}(A)$-valued
distribution of a single variable $\nu$. This completes the proof
of one direction of the theorem.

Assume now that there exists integer $m$ and $a\in M_{m\times m}(A)$,
$a>0$ so that $\eta_{m}(a)-a$ is not positive (here $\eta_{m}=\operatorname{id}\otimes\eta)$.
Let $\phi$ be a state on $M_{m\times m}(A)$ so that $\phi(\eta_{m}(a)-a)<0$.
Passing from $A$ to the enveloping von Neumann algebra $A^{**}$,
and from $a$ to a spectral projection of $a$, we may assume that
$a\in M_{m\times m}(A^{**})$ is projection and $\phi$ still satisfies
$\phi(\eta_{m}(a)-a)<-2\kappa<0$ for some fixed $\kappa>0$. By replacing
$\phi$ with a convex linear combination with a state that is strictly
positive on $a$ we may assume that $\phi(a)>0$ and sill $\phi(\eta_{m}(a))<\phi(a)-\kappa$. 

Let $\pi:M_{m\times m}(A^{**})\to B(H)$ be the GNS construction for
$\phi$ and denote by $\xi$ the associated cyclic vector in $H$.
Let $P\in B(H)$ be the rank one projection onto $\xi$. Denote by
$\hat{A}$ the $C^{*}$-algebra generated by $M_{m\times m}(A^{**})$
and $P$ inside $B(H)$. 

Choose $\delta>0$ so that $\delta<\kappa$. 

Note that $Tr(aPa)=\phi(a)$. Since $aPa$ is finite-rank, we can
find $N$ orthonormal vectors $\xi_{j}\in H$, $j=1,\dots,N$ so that
$\xi_{1}=\xi$ and $\left|Tr(aPa)-\sum\langle aPa\xi_{j},\xi_{j}\rangle\right|<\delta$
. Thus 
\[
\left|\sum\langle aPa\xi_{j},\xi_{j}\rangle-\phi(a)\right|<\delta.
\]
Let $\vartheta(x)=\frac{1}{N}\sum\langle x\xi_{j},\xi_{j}\rangle$
be a state on $\hat{A}$. Then $\vartheta(P)=\frac{1}{N}$ and so
\begin{equation}
\left|\frac{\vartheta(aPa)}{\vartheta(P)}-\phi(a)\right|<\delta.\label{eq:estimateOnVartheta}
\end{equation}

Let $X\in(B,\psi)$ be a self-adjoint random variable in a $\mathbb{C}$-valued
$C^{*}$-probability space $B$, and consider
\[
(C,\theta)=(\hat{A},\vartheta)*(B,\psi).
\]

Denote by $E=E_{\hat{A}}^{C}$ the conditional expectation from $C$
onto $\hat{A}$. If $\omega_{n}$ denotes the $n$-th scalar-valued
cumulant of $X$, then the $\hat{A}$-valued cumulants of $a^{1/2}Xa^{1/2}$
are given by
\[
\omega'_{n+1}(h_{1},\dots,h_{n})=\omega_{n+1}\ a\vartheta(ah_{1}a)\cdots\vartheta(ah_{n}a)a
\]
(see \cite{nica-speicher-shlyakht:charactFreeness}) and thus (recalling
that $a^{2}=a$) the $\hat{A}$-valued cumulants of the $\eta$-amplification
$Y$ of the distribution of $aXa$ are given by
\[
w''_{n+1}(h_{1},\dots,h_{n})=\eta_{m}(a)\ \omega_{n+1}\ \prod\vartheta(ah_{j}a).
\]
This means that the $\hat{A}$-valued cumulants of $PYP$ are given
by
\begin{eqnarray*}
\omega'''_{n+1}(h_{1},\dots,h_{n}) & = & P\eta_{m}(a)P\cdot\omega_{n+1}\cdot\prod\vartheta(aPh_{j}Pa)\\
 & = & \phi(\eta_{m}(a))\omega_{n+1}\cdot P\prod\vartheta(aPh_{j}Pa)
\end{eqnarray*}
(since $P\eta(a)P=\phi(\eta(a))P$). From this we see that the scalar-valued
cumulants of $PYP$ with respect to $\theta$ are given by
\[
\hat{\omega}_{n+1}=\vartheta(P)\phi(\eta_{m}(a))\vartheta(aPa)^{n}\cdot\omega_{n+1}.
\]
Let us finally set $Z=\vartheta(aPa)^{-1}PYP$. Then its scalar-valued
cumulants are given by
\[
\tilde{\omega}_{n+1}=\vartheta(P)\phi(\eta_{m}(a))\frac{\vartheta(aPa)^{n}}{\vartheta(aPa)^{n+1}}\omega_{n}=\frac{\vartheta(P)}{\vartheta(aPa)}\phi(\eta_{m}(a))\omega_{n+1}.
\]
Thus $\tilde{\omega}_{n}=\lambda\omega_{n}$ with
\[
\lambda=\frac{\phi(\eta_{m}(a))}{\vartheta(aPa)/\vartheta(P)}<\frac{\phi(a)-\kappa}{\vartheta(aPa)/\vartheta(P)}.
\]
By (\ref{eq:estimateOnVartheta}) and our choice of $\delta<\kappa$,
we conclude that
\[
\lambda<\frac{\phi(a)-\kappa}{\phi(a)-\delta}<1.
\]
In other words, the $\mathbb{C}$-valued distribution of $Z$ is the
same as that of the $\lambda$-convolution power of the $\mathbb{C}$-valued
distribution of $X$ for some $\lambda<1$.

Assume now for contradiction that the laws of all $\operatorname{id}\otimes\eta$-convolution
powers of $M_{m\times m}(A)$ are positive. Choose $a_{k}\in M_{m\times m}(A)$
so that $a_{k}\to a$ weakly and $\sup\Vert a_{k}\Vert<\infty$. Then
if we set $Y_{k}$ to be the $\eta$-convlution power of the distribution
of $a_{k}Xa_{k}$ (which is positive by our assumption), and $Z_{k}=\vartheta(aPa)^{-1}PY_{k}P$,
then we see that $Z_{k}\to Z$ in moments. But then positivity of
distributions of $Z_{k}$ implies positivity of the distribution of
$Z$.

To summarize, assuming that the $\eta$-convolution power of distribution
of every $M_{m\times m}(A)$-valued distribution is positive, we concluded
that the law of the scalar-valued distribution of $X$ admits a positive
$\lambda$-convolution power for some $\lambda<1$. But this is not
always possible: for example, we could start with $X$ having as distribution
the sum of two equal point masses; it is known that this distribution
admits no $\lambda$-convolution power if $\lambda<1$. \end{proof}

\end{document}